\title{Octonion associators}
\author{Stephen J. Sangwine\thanks{Stephen J. Sangwine is with the
        School of Computer Science and Electronic Engineering,
        University of Essex, Wivenhoe Park, Colchester, CO4 3SQ, UK.
        Email: \protect\url{sjs@essex.ac.uk}.}}
\newtheorem{theorem}{Theorem}
\newtheorem{definition}{Definition}
\newcommand*{\conjugate}[1]{\ensuremath{\overline{#1}}}
\begin{document}
\maketitle
\begin{abstract}
The algebra of octonions is non-associative (as well as non-commutative).
This makes it very difficult to derive algebraic results, and to perform
computation with octonions.
Given a product of more than two octonions, in general, the order of
evaluation of the product (placement of parentheses) affects the result.
Inspired by the concept of the commutator $[x,y]= x^{-1}y^{-1}xy$
we show that an associator can be defined that
multiplies the result from one evaluation order to give the result from
a different evaluation order. For example, for the case of three arbitrary
octonions $x$, $y$ and $z$ we have $((xy)z)a = x(yz)$,
where $a$ is the associator \emph{in this case}.
For completeness, we include other definitions of the commutator,
$[x,y]=xy-yx$ and associator $[x,y,z]=(xy)z-x(yz)$,
which are well known,
although not particularly useful as algebraic tools.
We conclude the paper by showing how to extend the concept of the multiplicative
associator to products of four or more octonions, where the number of evaluation
orders is greater than two.
\end{abstract}
\section{Introduction}
\label{sec:intro}
In a non-commutative algebra, such as the quaternions or octonions \cite{Ward:1997,10.1090/S0273-0979-01-00934-X,ConwaySmith},
where the result of the product of two arbitrary elements $x$ and $y$ differs (in general) according to order,
it is natural to ask: how does the product $xy$ differ from $yx$?
Equally, in a non-associative algebra, specifically, in this paper, the octonions,
how does $x(yz)$ differ from $(xy)z$?
A cursory search will reveal the existence of two quantities obtained
by taking the numerical difference in each case.
These are called the \emph{commutator} and the \emph{associator}
and are defined as follows:
\begin{definition}[Commutator \cite{CollinsDictMaths}]
$[x,y] = xy - yx$
\end{definition}
\begin{definition}[Associator \cite{10.1090/S0273-0979-01-00934-X}]
$[x,y,z] = x(yz) - (xy)z$
\end{definition}
These quantities, in a sense, measure whether their arguments commute (or associate).
If they do, the commutator (or associator) yields a zero result.
In the case of quaternions, of course, the associator is identically zero because
quaternion multiplication is associative.
Any four octonions satisfy the following \emph{associator identity} \cite[\S\,2.4, p.\,13]{Schafer}:
\[
a[x,y,z] + [a,x,y]z = [ax,y,z] - [a,xy,z] + [a,x,yz].
\]

Our interest in this paper, however, is not the numerical difference between the two results
(of multiplication in different orders),
but rather the question of \emph{how to convert one result into the other},
particularly for the case of products of three or more octonions, which we address in
the next section.
Before doing this however, we review the following result for the conversion of the
product of two non-commuting elements of an algebra into the result of the product
taken in the opposite order.
Confusingly, this is also known as a commutator\,\footnote{An alternative definition is possible,
by interchanging $x$ and $y$ on the right.}:
\begin{definition}[Commutator \cite{CollinsDictMaths}]
\label{def:mulcommutator}
$[x,y] = y^{-1} x^{-1} y x$
\end{definition}
If we write $[x,y] = c$ according to this definition, then we have $xyc = yx$,
since $xyc = x y\,y^{-1} x^{-1}\,y x = x x^{-1}\,y x = y x$.
That is, the commutator $c$ is a value that multiplies the product in one order
to yield the result of the product taken in the other order.
A clearer way to understand how this commutator works is to note that $y^{-1}x^{-1}$
is the inverse of $xy$.
Hence we see that the commutator as defined in Definition \ref{def:mulcommutator}
operates by multiplying $xy$ by the product of its inverse with $yx$.
It therefore converts one product into the other by converting the first product
into unity, then multiplying unity by the second product.
This is trivial, once seen.

The fact that this definition works for octonions as well as quaternions
(which appears not to have been noted until now)
depends on bi-associativity
\cite[\S\,11.9]{Salzman}, which says that in any product involving only $x$, $y$, their
conjugates or inverses, and real scalars\footnote{Although we are not considering complexified
octonions in this paper, `real scalars' could be replaced with `complex scalars' without loss
of validity.}, the order of evaluation is immaterial
(that is parentheses can be dropped).
This means that, for arbitrary non-zero octonions $x$ and $y$ we have:
$(xy)(y^{-1}x^{-1}) = xyy^{-1}x^{-1} = 1$, hence the inverse of $xy$ is $y^{-1}x^{-1}$,
a result that we will use in the sequel in Theorem \ref{thm:associator}.

The commutator that works in the opposite direction is the quaternion/octonion conjugate of the one
defined above.
This is easily seen from $xyc = yx$ by multiplying on the right by the conjugate of $c$
(denoted here by an overbar)
giving $xyc\conjugate{c} = yx\conjugate{c}$.
Since the commutator has unit norm, the product of $c$ with its conjugate is 1
and we find $xy = yx\conjugate{c}$.

\section{A `multiplicative' octonion associator}
In this section we show that it is possible to construct an octonion associator in
a manner similar to the commutator of Definition \ref{def:mulcommutator}.
\begin{theorem}
\label{thm:associator}
Given three arbitrary octonions $x$, $y$ and $z$,
they may be multiplied together in the order $x, y, z$ in two ways: $(xy)z$ or $x(yz)$.
These two products are related by the associator $[x,y,z]$ defined as follows:
\begin{equation}
\label{eq:associator}
[x,y,z] = (z^{-1}(y^{-1} x^{-1}))(x(yz)) 
\end{equation}
such that
\begin{align}
\label{eq:assoc1}
((xy)z)[x,y,z] &= x(yz)\\
\label{eq:assoc2}
(xy)z          &= (x(yz))\conjugate{[x,y,z]}.
\end{align}
\end{theorem}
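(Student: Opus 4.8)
The plan is to observe that, just as in the commutator case of Definition~\ref{def:mulcommutator}, the factor $z^{-1}(y^{-1}x^{-1})$ appearing in \eqref{eq:associator} is precisely the inverse of $(xy)z$, so that $[x,y,z]$ is simply the inverse of the first evaluation order multiplied by the second. Everything then follows from bi-associativity (equivalently, the fact that any two octonions together with the real scalars generate an associative subalgebra), provided one keeps careful track of which two elements are ``in play'' at each step.

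First I would show that $\bigl((xy)z\bigr)^{-1}=z^{-1}(y^{-1}x^{-1})$. Putting $w=xy$, the excerpt already establishes that $w^{-1}=y^{-1}x^{-1}$; applying the same argument to the octonions $w$ and $z$ gives $(wz)\bigl(z^{-1}w^{-1}\bigr)=w\bigl(zz^{-1}\bigr)w^{-1}=1$, and uniqueness of inverses in the octonion division algebra yields $\bigl((xy)z\bigr)^{-1}=z^{-1}w^{-1}=z^{-1}(y^{-1}x^{-1})$, with exactly the parenthesisation written in \eqref{eq:associator}.

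Next I would prove \eqref{eq:assoc1}. Writing $m=(xy)z$ and $p=x(yz)$, equation \eqref{eq:associator} reads $[x,y,z]=m^{-1}p$, so the left-hand side of \eqref{eq:assoc1} is $m\bigl(m^{-1}p\bigr)$. Since $m^{-1}$ is a real multiple of $\conjugate{m}$ it lies in the subalgebra generated by $m$, hence $m$, $m^{-1}$ and $p$ all lie in the associative subalgebra generated by the two octonions $m$ and $p$, whence $m\bigl(m^{-1}p\bigr)=\bigl(mm^{-1}\bigr)p=p$, which is \eqref{eq:assoc1}. For \eqref{eq:assoc2}, multiplicativity of the octonion norm on two-factor products gives $N\bigl([x,y,z]\bigr)=\bigl(N(z)N(y)N(x)\bigr)^{-1}N(x)N(y)N(z)=1$, so $[x,y,z]\,\conjugate{[x,y,z]}=1$; multiplying \eqref{eq:assoc1} on the right by $\conjugate{[x,y,z]}$ and associating within the subalgebra generated by the two octonions $(xy)z$ and $[x,y,z]$ (which contains $\conjugate{[x,y,z]}$) then gives $(xy)z=\bigl((xy)z\bigr)\bigl([x,y,z]\,\conjugate{[x,y,z]}\bigr)=\bigl(x(yz)\bigr)\conjugate{[x,y,z]}$.

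The one genuine subtlety — and the step I expect to require the most care when writing up — is the bookkeeping of which pair of octonions generates each associative subalgebra invoked: $xy$ and $z$ for the inverse, $(xy)z$ and $x(yz)$ for \eqref{eq:assoc1}, and $(xy)z$ and $[x,y,z]$ for \eqref{eq:assoc2}. As long as no manipulation is ever carried out inside a single product that truly involves three algebraically independent octonions, the derivation is a faithful transcription of the commutator argument of Section~\ref{sec:intro} and no non-associative surprises arise.
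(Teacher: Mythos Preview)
Your proposal is correct and follows essentially the same route as the paper: first reduce $z^{-1}(y^{-1}x^{-1})$ to $((xy)z)^{-1}$ via the substitution $w=xy$, then invoke bi-associativity on the two elements $(xy)z$ and $x(yz)$ to collapse $((xy)z)\bigl(((xy)z)^{-1}(x(yz))\bigr)$ to $x(yz)$, and finally obtain \eqref{eq:assoc2} by right-multiplying \eqref{eq:assoc1} by $\conjugate{[x,y,z]}$ using that the associator has unit norm. Your explicit bookkeeping of which two-generated associative subalgebra is in play at each step is a helpful elaboration, but the underlying argument is identical to the paper's.
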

\begin{proof}
We first show that
\begin{align}
\label{eq:toshow}
((xy)z)^{-1} &= (z^{-1}(y^{-1} x^{-1}))\\
\intertext{As we noted in the previous section, $(xy)^{-1} = y^{-1} x^{-1}$. Writing $w=xy$ therefore,
\eqref{eq:toshow} can be written as:}
(w z)^{-1} &= z^{-1}w^{-1}
\end{align}
and the result follows.

Now we can rewrite \eqref{eq:assoc1} as follows, by substitution of the right-hand side of
\eqref{eq:associator}, then replacement of the right-hand side of \eqref{eq:toshow} by $((xy)z)^{-1}$:
\begin{align}
((xy)z)[((xy)z)^{-1}x(yz)] &= x(yz)
\end{align}
Here we have an expression involving only two terms: $(xy)z$ and $x(yz)$, and their inverses.
Hence bi-associativity holds and the \emph{square} brackets may be dropped.
The left-hand side then reduces to the right-hand side.

The proof of \eqref{eq:assoc2} follows by the same method.
\end{proof}
Note that this associator can be understood in the same way as the commutator of Definition \ref{def:mulcommutator},
namely as the product of the inverse of one evaluation order times the result of the other evaluation order.
Thus we can write it as: $[x,y,z] = ((xy)z)^{-1} (x(yz))$.
This provides a means to generalise the idea to products of an arbitrary number of octonions,
evaluated in arbitrary orders as we outline in the next section.

\section{Extension to products of higher degree}
\label{sec:extension}
It is obvious that the result in Theorem \ref{thm:associator} can be extended to products of
more than three octonions which take the form $(\cdots(x_1 x_2) x_3) x_4 \ldots)\cdots)$ or similar
with the parentheses arranged from the opposite side.

However, there are other possible evaluation orders for products of 4 terms or more.
Consider the case of four terms, for example.
There are five possible orders of evaluation of four octonions in the order $w, x, y, z$:
\begin{align*}
p_1 &= (((wx)y)z)\\
p_2 &= (wx)(yz)\\
p_3 &= (w(x(yz)))\\
p_4 &= (w(xy))z\\
p_5 &= w((xy)z)
\end{align*}
Clearly we could define an associator in similar manner to Theorem \ref{thm:associator}
between any two of these orders.
The value of this associator can be calculated as follows:
\[
a_{ij} = p_i^{-1} p_j, \quad i, j \in\{1,2,3,4,5\}
\]
By the same reasoning as in the last paragraph of section \ref{sec:intro},
$a_{ji} = \conjugate{a_{ij}}$.

Although this provides a means to calculate a particular associator, it does not yield any insight
into the properties of the five different associators and the relations, if any, between them.

\section{Conclusion}
The multiplicative associator presented in Theorem \ref{thm:associator} is a possible additional
tool for working with octonion expressions and algorithms for computation with octonions.
There is clearly more work possible to expand the results in section \ref{sec:extension}.


\end{document}